\newcommand{\paren}[1]{\left(#1\right)}
\newcommand{\prect}[1]{\left[#1\right]}
\newcommand{\chav}[1]{\left\{#1\right\}}
\newcommand{\prectI}[1]{\left[ #1 \right.}
\newcommand{\prectF}[1]{\left. #1 \right]}
\newcommand{\parenF}[1]{\left. #1 \right)}
\begin{document}
\title{STUDY OF THE PARTICULAR SOLUTION OF A HAMILTON--JACOBI--BELLMAN EQUATION FOR A JUMP--DIFFUSION PROCESS
}
\renewcommand{\titleheading}
             {Particular Solution of a HJB equation}  
\author{\authoraddress{Cl{\'a}udia Nunes}
                      {Department of Mathematics and CEMAT, Instituto Superior T{\'e}cnico,\\
                       Av. Rovisco Pais, 1049--001 Lisboa, Portugal\\
                       (cnunes@math.tecnico.ulisboa.pt)}
\\
        \authoraddress{Rita Pimentel}
                      {RISE SICS V\"{a}ster\r{a}s AB,
                       \\
                       Stora Gatan 36, SE-722 12 V\"{a}ster\r{a}s, Sweden \\
                       (rita.duarte.pimentel@ri.se)}
\\
        \authoraddress{Ana Prior}
                      {Department of Mathematics, Instituto Superior de Engenharia de Lisboa,\\
                       R. Conselheiro Em\'idio Navarro 1, 1959-007 Lisboa, Portugal\\
                      (afpontes@adm.isel.pt)}
}
\renewcommand{\authorheading}
             {Nunes et al.}  

\maketitle

\begin{abstract}
We present an analytic solution of a differential-difference equation that appears when one solves an optimal stopping time problem with state process following a jump-diffusion process. 

This equation occurs in the context of real options and finance options, for instance, when one derives the optimal time to undertake a decision. Due to the jump process, the equation is not local in the boundary set. The solution that we present - which takes into account the geometry of the problem - is written in a backward form, and therefore its analysis (along with its implementation) is easy to follow.

\end{abstract}


\begin{keywords}
Optimal stopping; Jump-diffusion process; Options; Differential--difference equations.
\end{keywords}

\begin{ams}
37H10, 60G40. 
\end{ams}

\mainpaper  

\section{MOTIVATION}
In this paper we present the analytic solution of a differential--difference equation that appears when one solves optimal stopping problems with state process following a geometric Brownian motion with jumps driven by a Poisson process.  The main difficulty of this equation is consequence of the jump process, which in its turn implies that the equation is not local in one point (that, as we will see in the sequel, is the boundary between the continuation and stopping regions) -- see, for instance, Murto \cite{Murto}.

This characteristic is not universal, i.e., there are optimal stopping problems involving jump--diffusions processes for which the differential--difference equation does not exhibit this behaviour, and for which finding a closed form solution is easier. However, as we will show latter, this is not the case when the jumps of the diffusion may lead directly to the stopping region, across the boundary state.

The seminal works in finance options (like the classical work of Black and Scholes \cite{BlackScholes}, where for the first time an pricing formula was derived)  or real options (as the seminal book of Dixit and Pindyck \cite{Dixit:Pindyck:94}) assume that the sample path of the involved state process is continuous, with probability one. 
But a quick look at newspapers shows that nowadays investors need to take decisions facing uncertainty and the likelihood of financial crashes, which are the climax of the so--called log--periodic power law signatures associated with speculative bubbles (Johansen and Sornette \cite{JohansenSornette2010}). One example of this occurred in February 2015, when due to a cyber--attack, a high--frequency trading company started uncontrollably buying oil futures, causing a downward jump in the oil prices\footnote{https://www.businessinsider.com/investigation-into-hft-firm-for-using-an-algo-gone-wild-that-caused-oil-trading-mayhem-in-just-5-seconds-2010-8}. 
%

In this context, a crash is a significant drop in the total value of the market,  creating a situation wherein the majority of investors are trying to flee the market at the same time and consequently incurring massive losses. 
Indeed, in the presence of a crash investors likely take the decision to sell their assets. This is precisely what we mean by a jump leading to the stopping region (where, in this case, stopping means selling the asset), across the boundary. 
As the crash means that there is a significant drop, we borrow the probabilistic terminology and we call it a {\em jump} (in the above example, a downward jump).


These sudden changes in the state variable can also be found when one decides about (long term) investments in projects, often addressed in the context of real options. When it comes to real options, the temporal term is long, and therefore one may expect that during the life of an investment, unexpected events may occur, leading to a disrupt of the market. 

One example of a disrupt event is the introduction or abolishing of public subsidies. There are many economical sectors where subsidies play an important role, such as agriculture.
It is well established that agricultural pricing policies (taxes, subsidies) have a substantial
influence on farmer production decisions\footnote{http://www.pbl.nl/en/publications/the-impact-of-taxes-and-subsidies-on-crop-yields}. 
For example, USA has been supporting farming since early times. But after several decades, these incentive policies did not proved to be successful\footnote{https://grist.org/article/farm\_bill2/}.  In 2005 Bush administration decided to change the farm incentive policy, cutting in agricultural subsidies\footnote{https://www.agpolicy.org/weekpdf/258.pdf}. Evidently, this decision led to changes in private investment farming projects.

Another area where subsidies play an important role is the renewable energy (RE). 
In an effort to reach the ambitious targets of the EUÕs Strategic Energy Technology Plan
(SET--Plan), EU member states have implemented support mechanisms of various forms (e.g., price mechanisms, like carbon tax or permit trading schemes) intended to incentive and
accelerate adoption of RE technologies. These climate change policies have introduced a new
factor that has to be included into the investment decision and have become a major source of
uncertainty in energy strategy. The problem is that policies aimed at reducing emissions have frequently and
unexpectedly been changed for a number of reasons, for instance, change of governments, collapse of
the international cooperation for reducing GHG emissions, arrival of new information about
climate sensitivity, and fiscal pressure. 
In the last decade we have seen many works about the impact of wrong investment decisions. We refer, for instance, \cite{Boomsma2, Boomsma1, Hagspiel2018}.

In all the above examples, it is of the most crucial importance to assess the impact of the jumps in the decision, and, in particular, in case the jumps anticipate the optimal decision. 
The assumption about the jumps is crucial in the differential equation that holds in the continuation region. And when the jumps may lead directly to the stopping region, it is quite challenging to solve analitically this equation. And this is precisely our contribution with the paper.

The paper is organized as follows: in Section \ref{mw} we present the model, discuss the assumptions and present related work. In Section 3 the basic mathematics involved in the resolution of the problem is presented and in Section 4 a backwards procedure to obtain the solution of the problem is explained and illustrated with an example. The main results and the proofs are given in Section 5. Finally, Section 6 is devoted to the conclusions of the work.

\section{MODEL AND RELATED WORK}\label{mw}
The results derived in this paper can be used in different applications, although along the paper we describe and illustrate it in the context of real options. In particular, we consider the optimal moment to undertake an investment decision. 
We note that we could also change the setting in order to consider an exit decision, as the mathematics involved is similar, with simple changes.

This is an optimal stopping problem, formally defined as follows:  find $V(x)$ and $\tau^{\star} \in \mathcal{T}$ such that
\begin{equation}\label{eq:IntialProblem}
V(x) = \sup_{\tau \in \mathcal{T}} J^{\tau}(x) = J^{\tau^{\star}}(x), \quad x \in \R^+,
\end{equation}
with $\mathcal{T}$ being the set of all stopping times, and $J$ the discounted performance criterion,  given by
\begin{equation}\label{eq:J}
J^{\tau}(x) = \mathbb{E}^{x} \prect{e^{-r \tau} {g(X({\tau}))} \chi_{\chav{\tau < + \infty}}},
\end{equation}
where $r>0$ states for the discount factor, $\chi_{\chav{A}}$ represents the indicator function on set $A$, and $\{X(t),t>0\}$ represents the stochastic process that accounts for the uncertainty of the investment. For example, in finance options $X(t)$ may represent the underlying stock price at time $t$, whereas in real options may represent the price of a product at time $t$. In \eqref{eq:J}, the function $g$ is the so--called {\em running function}, which accounts for the return of the investment.

In the current work, we assume that the  state process dynamics, $\{X(t), t>0\}$,  is a one--dimensional jump--diffusion. In fact, it is the solution of the following stochastic differential equation
\begin{equation}\label{eq:geralC}
\frac{d X(t)}{X(t^-)} = \mu dt + \sigma  d W(t) + \kappa dN(t),
\end{equation}
with initial value $X(0) = x > 0$, where $\{W(t), t>0\}$ is a standard one--dimensional Brownian motion, and $\{N(t),t>0\}$ 
is a time--homogeneous Poisson process, with intensity $\lambda>0$. Moreover, $\mu$ is the drift of the process, $\sigma > 0$ is the volatility and $\kappa$ is the multiplicative factor, in case a jump occurs. 
The notation $X(t^-)$ means that whenever there is a jump, the value of the process before the jump is used on the left--hand side of the formula.
%
The solution of \eqref{eq:geralC} is given by
\begin{equation}\label{eq:gbm+J}
X (t) = x \; e^{\prect{(\mu-\frac{\sigma^2}{2}-\lambda \kappa) t +\sigma W(t)}} \left( 1+ \kappa\right)^{N(t)}.
\end{equation}

The interest of real options literature in problems involving jump--diffusion processes 
is not new. We refer to  Kou \cite{Kou} for a survey on jump-diffusion models for finance engineering. In the area of real options, there has been an increasing interest about jump--diffusion processes in the context of technology adoption. 
We refer, for instance, to Balcer and Lippman \cite{Balcer84}, Farzin et al. \cite{FHK1998}, Huisman \cite{Huisman} and Hagspiel et al. \cite{Hagspiel2015}. 
These papers study the best time to invest in a new and more efficient technology, assuming that the process that describes the innovative process of a technology development can be modeled by a Poisson process, and therefore it is a pure jump--process 
(which, in the case of  Hagspiel et al.  \cite{Hagspiel2015}, is non--homogeneous with time--dependent intensity). 

Furthermore, Kown \cite{Kwon} and Hagspiel et al. \cite{Hagspiel2016} consider a combination of a continuous process with a jump--process, but they do not consider a sequence of innovations arriving over time. Instead, they assume a one--single innovation opportunity, with other involved options (like the option to exit the market). Kwon \cite{Kwon} work is generalized in Hagspiel et al. \cite{Hagspiel2016}, by considering capacity optimization, and by Matom{\"a}ki \cite{matomaki2013two}, considering different stochastic processes representing the profit uncertainty.

In another context, Couto et al. \cite{CoutoNunesPimentel} and  Nunes and Pimentel \cite{NunesPimentel} consider the investment problem in a high--speed railway service,  assuming that both the demand and the investment cost are modeled by jump--diffusion processes. Although these papers start by assuming two sources of uncertainty, they end up with the study of a one-dimensional problem. This happens because they assume that the value of the firm is homogeneous, and therefore it is possible to consider a change of variables that will turn the 2--dimensional problem in a 1--dimensional one.  Murto \cite{Murto} also consider two stochastic processes, in order to model technological and revenue uncertainties, motivated by wind power investment. He assumes that 
the investment cost depends on the technological progress, driven by a pure Poisson process, whereas the price of the output is a geometric Brownian motion. As the value of the project is  homogeneous, the same kind of approach as in Nunes and Pimentel \cite{NunesPimentel} is proposed. 

Motivated by these references, we assume that the jumps are multiplicative, its magnitude is constant and they are such that the process may enter the stopping region with a jump. In case we are dealing with an investment problem, the {{\em stopping region} is given by $[x^\star, + \infty )$,  
where $x^\star$ is the exercise threshold. 
Then, the stopping region may be attained due to the occurrence of a jump if and only if $\kappa>0$. 
Moreover, in this case the stopping region can be reached in two different ways:
\begin{itemize}
\item[(i)] Either due a continuous change, caused by the diffusion part, where the state process {\em hits} the boundary threshold; 
\item[(ii)] Or due to the occurrence of a jump, where the state process {\em crosses} the boundary threshold.
\end{itemize}
If $\kappa<0$, every time there is a jump, the state process would fall back\footnote{In case we would be dealing with exit options, it would be the other way: the optimal decision to exit may be anticipated in case the jumps are negative.} and therefore the stopping region may only be reached through the diffusion part.

This is precisely the main difference of our model with respect to the above mentioned references -- with the exception of Merton \cite{Merton:76} and Murto \cite{Murto}. The other references  deal with situations where either there is just the jump process (for which it is possible to solve the corresponding difference equation, as there is no differential part) -- as it is the case of Huisman  \cite{Huisman} -- or the process is a jump--diffusion but the jumps always lead to the continuation region -- as it is the case of Nunes and Pimentel \cite{NunesPimentel}. 

Our work is closely related with Merton \cite{Merton:76}, in the sense that he considers a model (to price American options) similar to the one presented here. He also assumes multiplicative jumps, but his model is more general, in the sense that the magnitude of the jumps is random. However, he does not provide a closed--form expression for the price but instead a computational efficient formula (equation (14) of Merton \cite{Merton:76}). 

Finally, we assume that the stopping time is not bounded, meaning that this is an infinite time--horizon problem.  The infinite--maturity hypothesis helps to reduce the dimensionality of the problem
by removing its dependence on time, therefore concentrating only on stationary solutions.  We follow, for instance, Dixit and Pindyke \cite{Dixit:Pindyck:94}, where the decisions to start,
abandon, reactive and mothball a given project are reduced to the solution of a systems of linear equations, with analytic solutions.

In the following section we present in more detail the problem and the equation that we are able to provide a closed expression solution.

\section{DIFFERENTIAL--DIFFERENCE EQUATION}

In this section we present the basic mathematics involved in the resolution of the stopping problem (\ref{eq:IntialProblem}). We refer to \cite{Applebaum:2004, Bass:76, Oksendal2009, Sennewald:05}, for instance, for proofs, derivations and further comments on the results that we will use along this section.

The infinitesimal generator associated to \eqref{eq:gbm+J} corresponds to adding a drift and diffusion term to a jump operator, i.e.
\begin{equation*}
\mathcal{L} v(x) = \frac{\sigma^2}{2} x^2 v^{\prime \prime}(x) + (\mu - \lambda \kappa) x v^\prime (x) + \lambda \paren{ v(x (1+\kappa) ) - v(x)}, 
\end{equation*} 
for $v \in C^1$ 
and $x \in \R^+$. 
Furthermore, the Hamilton--Jacobian--Bellman (HJB) equation of problem \eqref{eq:IntialProblem} takes the form of the variational inequality
\begin{equation*}\label{HJB}
\min \chav{r V(x) - \mathcal{L}V(x) , V(x) - g(x)}=0.
\end{equation*}
%
%
%
%
%

From Propositions 3.3 and 3.4 of {\O}ksendal and Sulem--Bialobroda \cite{Oksendal2009}, we know that the following set is contained in the continuation region
$${\cal U}=\{x \in \R^+: r g(x) - \mathcal{L} g(x) = 0\}.$$
This means that, for each value of $x \in {\cal U}$, the value of continuing  is larger than the value of stopping and therefore the decision is postponed.
Taking into account the $\cal U$ set and the usual assumption that $r > \mu$, we can guarantee that it is not optimal to stop for lower values of $x$. 
Then, as it was presented before, the stopping region is given by $[x^\star,+\infty)$, whereas $(0,x^\star)$ is the continuation region.
Therefore, the optimal stopping time, $\tau^\star$, is such that $\tau^\star=\inf\{t>0 : X_t \geq x^\star\}$, where $x^\star$ is a threshold that need to be found.

In the stopping region $V$ is equal to $g$, i.e. $V(x)=g(x)$ for $x \geq x^\star$. 
 On the other hand, in the continuation region $V$ verifies the following equation
\begin{equation}\label{OriginalEq}
x^2 V^{\prime \prime}(x)+ a \; x V^\prime(x) + b \; V(x) - c \; V(x(1+k)) = 0,
\end{equation}
where $a=\frac{2 (\mu-\lambda k)}{\sigma^2}$, $b=- \frac{2(r+\lambda)}{\sigma^2}$ and $c=- \frac{2 \lambda}{\sigma^2}$.
This is called in the literature {\em mixed partial differential--difference equation} (see Merton \cite{Merton:76}), and it is known to be difficult to solve, specially when one impose boundary conditions. We refer to Johansen and Zervos \cite{Johnson2007solution}, where analytic and
probabilistic properties for the solution of an equation similar to \eqref{OriginalEq} are provided (but with other specificities, notably in terms of the coefficients of the terms involved in the equation).

We note that Equation (\ref{OriginalEq}) is a particular case of a delay differential equation. The interest in this class of equations is growing in all scientific areas, especially in control engineering. Delay differential equations are usally difficult to solve, and in many cases one needs to resort to numerical schemes. Thus, this class of equations has been subject of intense study, as Arino et al. \cite{Arino} shows. 
As we will explain in the rest of the paper, in the scope in which we are working, one can provide an analytical solution for Equation \eqref{OriginalEq}.


%
%
%
%

\section{BACKWARDS ANALYSIS}
%
Given the geometry of the solution of the optimal stopping time problem \eqref{eq:IntialProblem}, we present a backwards procedure to solve \eqref{OriginalEq}. 

If we consider $x \in \parenF{\prectI{\frac{x^{\star}}{1+ \kappa}, x^{\star}}}$ 
then $x (1 + \kappa) \in \prectI{x^{\star} , + \infty )}$, meaning that $V(x(1+\kappa))=g(x(1+\kappa))$. Thus, in this case Equation \eqref{OriginalEq} can be re-written as
\begin{equation}\label{OSjumps2}
x^2 V^{\prime \prime}(x)+ a \; x V^\prime(x) + b \; V(x) = c \; g(x(1+\kappa)).
\end{equation}
Equation \eqref{OSjumps2} is a non--homogeneous second order ODE. 
The correspondent homogeneous equation,
\begin{equation*}\label{eqEC}
x^2 V^{\prime \prime}(x)+ a \; x V^\prime(x) + b \; V(x) = 0,
\end{equation*}
is called second order Euler--Cauchy equation.
This equation appears in a wide range of problems, such as sorting and searching algorithms (see Chern et al. \cite{Chern:2002:ATC:606234.606243}) or physics and engineering applications (see Chen and Wang \cite{ChenWang}). 
Its solution, hereby denoted by $V_h$, is well--known (see, for instance, \cite{2008elementary, 2006elementary}) and strongly depends on the roots of the characteristic polynomial 
\begin{equation}\label{discPoly}
Q(\beta) = \beta \paren{\beta - 1} + a \beta + b.
\end{equation}
%
In our case, given that $b<0$, $Q$ has two distinct real roots\footnote{$Q(\beta) = 0 \Leftrightarrow \beta = \frac{1}{2} \prect{ 1 - a \pm \sqrt{(1-a)^2 - 4 b}}$.}, say $\beta_1>0$ and $\beta_2<0$. Then, it follows that 
\begin{equation}\label{eq:homSol}
V_h(x) = \delta_1 x^{\beta_1} + \delta_2 x^{\beta_2}.
\end{equation}
The (general) solution of \eqref{OSjumps2}, 
which we denote by $V_1$, is given by $V_h$ plus a particular solution, which we denote by $V_p^{1}:=f_g^{1}$. 
Note that the superscript in $V_p^1$ and $f_g^1$ represents how many jumps we are away from the stopping region\footnote{We use this type of notation for all particular solutions.}. Moreover, the bottom index in $f_g^{1}$ emphasizes that this function depends explicitly on $g$.
For $x \in \parenF{\prectI{\frac{x^{\star}}{1+ \kappa}, x^{\star}}}$, we have
\begin{equation}
\label{V1}
V(x):= V_1(x) = V_h(x) + V_p^{1}(x) = \delta_1 x^{\beta_1} + \delta_2 x^{\beta_2} + f_g^{1}(x), 
\end{equation}
where $\delta_1$ and $\delta_2$ are parameters that need to be determined (see Figure \ref{fig:V1} for an illustration). 

\begin{center}
\begin{figure}[h!]
\centering
\begin{tikzpicture}
\draw[thick,->,ultra thick] (-5,0) -- (5,0) node[anchor=north west] {$x$};
\node[align=center] at (4,0.5) {$\mathcal{S}$};
\node[align=center] at (-3,0.5) {$\mathcal{C}$};
\node[align=center] at (1,-0.4) {$\frac{x^\star}{1+\kappa}$};
\node[align=center] at (2.5,-0.3) {$x^\star$};
\node at (1,0) [circle,fill=black,scale=0.4] {};
\node at (2.5,0) [circle,fill=black,scale=0.5] {};
\path[draw,ultra thick] (-5,-0.2)--(-5,0.2);
\draw[decorate,decoration={brace,amplitude=5pt,mirror},xshift=0pt,yshift=-20]	(2.6,0) -- (4.8,0);
\draw (3.7,-1.2) node {\footnotesize $g(x)$};
\draw[decorate,decoration={brace,amplitude=5pt,mirror},xshift=0pt,yshift=-20]	(1,0) -- (2.4,0);
\draw (3.7,-1.2) node {\footnotesize $g(x)$};
\draw (1.7,-1.2) node {\footnotesize $V_1(x)$};
\end{tikzpicture}
\caption{Representation of $V$ in the last interval before stopping.}
\label{fig:V1}
\end{figure}
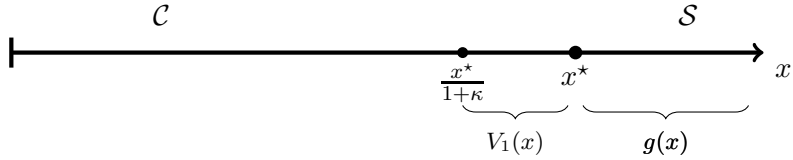
\end{center}

Next we derive the value of $V$ when we are two jumps away from the stopping region. Following the same notation, we denote this function by $V_2$, defined for $x \in \left[\frac{x^\star}{(1+\kappa)^2}, \frac{x^\star}{1+\kappa}\right)$.
In this case $x (1+\kappa) \in \parenF{\prectI{\frac{x^{\star}}{1+ \kappa}, x^{\star}}}$, so $V(x(1+\kappa)) = V_1 (x(1+\kappa))$. This means that  \eqref{OriginalEq} can be re--written as follows
\begin{equation*}
x^2 V^{\prime \prime}(x)+ a \; x V^\prime(x) + b \; V(x) = c \; V_1(x(1+\kappa)).
\end{equation*}
The homogeneous part of the previous equation is the same as before, and thus the  solution is provided in \eqref{eq:homSol}. We just need to take into account the particular solution, which we denote by $V_p^{2}$. 
This particular solution depends on $V_p^{1}$ (and thus depends on $g$) but also depends on $V_h$ (then also depends on the roots of $Q$, $\beta_1$ and $\beta_2$), as $V_1$ is given by \eqref{V1}. 
Therefore, both the homogeneous and the particular solution for this case {\em share} the powers $\beta_1$ and $\beta_2$. Using Theorem 3.5 of Sabuwala  and De Leon \cite{Doreen}, we end up with the following particular solution
\begin{equation}
V_p^{2}(x) =  \eta_1^{2} \ln x \; x^{\beta_1} + \eta_2^{2} \ln x \; x^{\beta_2} + f_g^{2}(x). 
\end{equation}
We write $f^{2}_g$ to denote the part of the solution that depends strictly on $g$ (following the same reasoning as for $f_g^{1}$), 
 whereas $\eta_1^{2}$ and $\eta_2^{2}$ depend on the parameters from the homogeneous solution.
So, for $x \in \left[\frac{x^\star}{(1+\kappa)^2}, \frac{x^\star}{1+\kappa}\right)$ 
 (see Figure \ref{fig:V2} for an illustration)
 , we have 
 \begin{equation}
 \label{V2}
 V(x):= V_2(x)= \delta_1 x^{\beta_1} + \delta_2 x^{\beta_2}+\eta_1^{2} \ln x \; x^{\beta_1} + \eta_2^{2} \ln x \; x^{\beta_2} + f_g^{2}(x).
 \end{equation}

\begin{center}
\begin{figure}[h!]
\centering
\begin{tikzpicture}
\draw[thick,->,ultra thick] (-5,0) -- (5,0) node[anchor=north west] {$x$};
\node[align=center] at (4,0.5) {$\mathcal{S}$};
\node[align=center] at (-3,0.5) {$\mathcal{C}$};
\node[align=center] at (0,-0.45) {$\frac{x^\star}{(1+\kappa)^2}$};
\node[align=center] at (1,-0.4) {$\frac{x^\star}{1+\kappa}$};
\node[align=center] at (2.5,-0.3) {$x^\star$};
\node at (0,0) [circle,fill=black,scale=0.4] {};
\node at (1,0) [circle,fill=black,scale=0.4] {};
\node at (2.5,0) [circle,fill=black,scale=0.5] {};
\path[draw,ultra thick] (-5,-0.2)--(-5,0.2);
\draw[decorate,decoration={brace,amplitude=5pt,mirror},xshift=0pt,yshift=-20]	(2.6,0) -- (4.8,0);
\draw[decorate,decoration={brace,amplitude=5pt,mirror},xshift=0pt,yshift=-20]	(1.1,0) -- (2.4,0);
\draw[decorate,decoration={brace,amplitude=5pt,mirror},xshift=0pt,yshift=-20]	(0,0) -- (0.9,0);
\draw (3.7,-1.2) node {\footnotesize $g(x)$};
\draw (1.7,-1.2) node {\footnotesize $V_1(x)$};
\draw (0.5,-1.2) node {\footnotesize $V_2(x)$};
\end{tikzpicture}
\caption{Representation of $V$ in the last two intervals before stopping.}\label{fig:V2}
\end{figure}
\end{center}

Proceeding with one step back, we determine the value of $V$ when we are three jumps aways from the stopping region, which we call $V_3$.
When $x \in \left[\frac{x^\star}{(1+\kappa)^3}, \frac{x^\star}{(1+\kappa)^2}\right)$, then $x (1+\kappa) \in \left[\frac{x^\star}{(1+\kappa)^2}, \frac{x^\star}{1+\kappa}\right)$ and $V(x(1+\kappa)) = V_2 (x(1+\kappa))$. Then, Equation \eqref{OriginalEq} is re-written as
\begin{equation}
\label{step3}
x^2 V^{\prime \prime}(x)+ a \; x V^\prime(x) + b \; V(x) = c \; V_2(x(1+\kappa)).
\end{equation}
As before, the homogeneous equation is the same and therefore $V_h$ is part of the solution of this equation. Once more, the problem is reduced to the derivation of a particular solution, which is not trivial, as the function $V_2$ involves polynomials of power $\beta_1$ and $\beta_2$ multiplied by a logarithm (see Equation \eqref{V2}).
After some calculations, one may find that the particular solution of \eqref{step3} is of the following form
$$V^{3}_p(x) = \eta_1^{3} \ln x \;  x^{\beta_1} +  \eta_2^{3} \ln x \;  x^{\beta_2} + \eta_3^{3} \paren{\ln x}^2  x^{\beta_1} + \eta_4^{3} \paren{\ln x}^2  x^{\beta_2} + f_g^{3}(x).$$
Also here $f_g^3$ stands for the part of the solution that depends strictly on $g$ whereas $\eta_1^{3}, \eta_2^{3}, \eta_3^{3}$ and $\eta_4^{3}$ depend on the parameters from the homogeneous solution. 
As previously, for $x \in \left[\frac{x^\star}{(1+\kappa)^3}, \frac{x^\star}{(1+\kappa)^2}\right)$, we have
\begin{eqnarray*}
V(x) := V_{3}(x) &=&  \delta_1 x^{\beta_1} + \delta_2 x^{\beta_2} + \eta_1^{3} \ln x \;  x^{\beta_1} +  \eta_2^{3} \ln x \;  x^{\beta_2} \\
&& + \eta_3^{3} \paren{\ln x}^2  x^{\beta_1} + \eta_4^{3} \paren{\ln x}^2  x^{\beta_2} + f_g^{3}(x).
\end{eqnarray*}

A similar reasoning applies for other intervals of $x$.
When we are $i$ (with $i \in \N$) jumps away from the stopping region, we have $\frac{x^{\star}}{(1+\kappa)^i} \leq x < \frac{x^{\star}}{(1+\kappa)^{i-1}}$ and $V$ is represented by $V_i$, which may be obtained using a similar reasoning as the one that we have described for $V_1, V_2$ and $V_3$. 
Indeed, $V$ is a piecewise function, given by
\begin{equation}\label{Vdefinition}
V(x) = 
\begin{cases}
V_i (x) & \text{if} \quad \frac{x^{\star}}{(1+\kappa)^i} \leq x < \frac{x^{\star}}{(1+\kappa)^{i-1}}\\
g(x) & \text{if} \quad x \geq x^{\star}
\end{cases},
\end{equation}
where
$$V_i(x)=\delta_1 x^{\beta_1}+\delta_2 x^{\beta_2}+V_p^{i}(x)$$
with
\begin{eqnarray}
&& V_p^1(x) = f_g^1(x) \quad \text{and} \quad \notag \\
&& V_p^i(x) = \sum_{j = 1}^{i-1} \prect{\eta^i_{2 j -1} \;  x^{\beta_1} + \eta^i_{2 j} \;  x^{\beta_2}} \paren{\ln x}^j + f_g^i(x), \;\text{for} \; i \in \N \setminus \chav{1}. \label{eq:Vpi}
\end{eqnarray}

Clearly, one needs to find functions that are solutions of certain differential equations, that depend intrinsically on the function $g$, considered in the definition of the problem.
In order to illustrate the above derivations, we present next the  calculations for a particular case, that we find often in the literature of real options.

\begin{example}\label{Example1}
Let us consider a firm that want to decide about investing in a new product. 
The uncertainty is given by the demand for such a product, where $X(t)$ represents this demand at time $t$.
Let us assume that $\chav{X(t) : t>0}$ follows a jump-diffusion process, as the one described in \eqref{eq:gbm+J}.
%
 Moreover, let us assume that after investment, the firm's profit is given by
$$g(x) =\rho x^{\theta} - I,$$
with $\theta$ denoting the elasticity parameter with respect to the demand process. This is an usual assumption for the profit of firms, and it is called in the literature as $g$ being an iso--elastic demand function (see Nunes and Pimentel \cite{NunesPimentel}).
For simplicity and without loss of generality, we assume that  $\theta$ is not a root of the characteristic polynomial $Q$.

As we explained before, the general solution of the optimal stopping time problem \eqref{eq:IntialProblem} is given by \eqref{Vdefinition}.
In this example we want to present the particular  solutions $V_p^{i}$, for $i=1,2,3$. Doing some calculations, we can get the expressions of the parameters for each function.
\begin{align*}
V_p^{1}(x) = &  \xi_1^{1} x^{\theta} + \xi_2^{1}, \quad \text{with}  \quad \xi_1^{1}=\frac{c \rho (1 + \kappa)^\theta}{Q(\theta)},\;\xi_2^{1}=-\frac{c I}{b}.
\end{align*}
\begin{align*}
V_p^{2}(x) = & \eta_1^{2} \ln x \; x^{\beta_1} + \eta_2^{2} \ln x \; x^{\beta_2} +  \xi_1^{2} x^{\theta} + \xi_2^{2}  \quad \text{with} \quad \\ 
& \eta_1^{2}=\delta_1 \frac{ c  (1+\kappa)^{\beta_1}}{Q^\prime(\beta_1)}, \; \eta_2^{2}= \delta_2 \frac{c (1+\kappa)^{\beta_2}}{Q^\prime (\beta_2)}, \;\\ 
& \xi_1^{2}= \rho \prect{\frac{c (1+\kappa)^\theta}{Q(\theta)}}^2, \; 
 \xi_2^{2}= - \paren{\frac{c}{b}}^2 I.
\end{align*}
\begin{align*}
V_p^{3}(x) = & 
\eta_1^{3}  \ln x \;  x^{\beta_1} + 
  \eta_2^{3}  \ln x \;  x^{\beta_2} + 
\eta_3^{3} \paren{\ln x}^2 \;  x^{\beta_1} + \eta_4^{3}  \paren{\ln x}^2 \;  x^{\beta_2} + \xi_1^{3} x^\theta + \xi_2^{3},\\
&\text{with} 
 \; \\
& \eta_1^{3}= \delta_1 \frac{ c  (1+\kappa)^{\beta_1}}{Q^\prime(\beta_1)} \prect{1 + \frac{c (1+ \kappa)^{\beta_1}}{Q^\prime(\beta_1)} \paren{\ln (1+\kappa) - \frac{1}{Q^\prime(\beta_1)}} }
\\
& \eta_2^{3}= \delta_2 \frac{ c  (1+\kappa)^{\beta_2}}{Q^\prime(\beta_2)} \prect{1 + \frac{c (1+ \kappa)^{\beta_2}}{Q^\prime(\beta_2)} \paren{\ln (1+\kappa) - \frac{1}{Q^\prime(\beta_2)}} }
\\
& \eta_3^{3} = \frac{\delta_1}{2} \prect{\frac{c (1+ \kappa)^{\beta_1}}{Q^\prime (\beta_1)}}^2, \; \eta_4^{3} = \frac{\delta_2}{2} \prect{\frac{c (1+ \kappa)^{\beta_2}}{Q^\prime (\beta_2)}}^2 \\
& \xi_1^{3}=\rho \prect{\frac{c (1+\kappa)^\theta}{Q(\theta)}}^3, \; \xi_2^{3}=- \paren{\frac{c}{b}}^3 I.
\end{align*}
%
\end{example}

Note that the results presented in the previous example follow from straightforward calculations. However, if one want to show the results for each $i\in\mathbb{N}$, then it will lead to tedious and not very fast calculations. 
In addition, we emphasize that for all the cases, one needs to find the particular solution of an ODE of order two, which belongs to the following class of ODEs:  
\begin{equation}\label{TheEq}
x^2 y''(x) + a x y'(x) + b y(x) = A x^\alpha (\ln x)^n,
\end{equation}
with $x > 0$, $a, b \in \R$, $\alpha, A \in \R \setminus \chav{0}$ and $n \in \N_0$.

In the next section we present results that lead to a general way to compute the particular solution of Equation \eqref{TheEq}, which we denote by $y_p$.

\section{MAIN RESULTS}

We want to find a particular solution to Equation \eqref{TheEq}. 
The type of solution is understandable from the special case presented in the Example \ref{Example1}. However, a systematic way to obtain all the coefficients is not so easy to develop. 

We start deriving a recursive expression for the particular solution of  \eqref{TheEq}. 
Later, using this result, we will be able to present explicit expressions for the involved coefficients.

\begin{theorem}[recursive]\label{Teo1}
Consider the second order ODE presented in \eqref{TheEq}, with
the corresponding characteristic polynomial $Q$ given by \eqref{discPoly}. Then the following cases occur:

\begin{itemize}
\item If $\alpha$ is not a root of $Q$, the particular solution of \eqref{TheEq} is $$y_p(x) = x^\alpha \sum_{i=0}^{n} c_i \paren{\ln x}^i,$$ where $c_n=\frac{A}{Q(\alpha)}$, $c_{n-1}=- n A \frac{Q'(\alpha)}{Q(\alpha)^2}$ and $c_i=-\frac{i+1}{Q(\alpha)} \prect{Q'(\alpha)  c_{i+1} + (i+2) c_{i+2}}$ for $i=0, 1, 2, ..., n-2$.
\item If $\alpha$ is a root of $Q$ with multiplicity one, the particular solution of \eqref{TheEq} is $$y_p(x) = x^\alpha \sum_{i=0}^{n} c_i \paren{\ln x}^{i+1},$$ where
$c_n=\frac{A}{(n+1) Q'(\alpha)}$ and $ c_i = - \frac{i+2}{Q'(\alpha)} c_{i+1}$, for $i=0,1,2, ..., n-1$.
\item If $\alpha$ is a root of $Q$ with multiplicity two, the particular solution of \eqref{TheEq} is $$y_p(x) = x^\alpha c_n \paren{\ln x}^{n+2},$$ where
$c_n=\frac{A}{(n+1)(n+2)}$.
\end{itemize}
\end{theorem}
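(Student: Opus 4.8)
The plan is to turn the Euler--Cauchy operator on the left of \eqref{TheEq} into a constant--coefficient one by the substitution $t=\ln x$, and then to solve the transformed equation by exponential shift. Writing $D=\frac{d}{dt}$, one has $x\frac{d}{dx}=D$ and $x^{2}\frac{d^{2}}{dx^{2}}=D^{2}-D$, so the operator $L[y]:=x^{2}y''+axy'+by$ becomes $D^{2}+(a-1)D+b$, which is exactly $Q(D)$ for $Q$ as in \eqref{discPoly} since $Q(\beta)=\beta(\beta-1)+a\beta+b=\beta^{2}+(a-1)\beta+b$. Because $x^{\alpha}(\ln x)^{n}=e^{\alpha t}t^{n}$ in the new variable, Equation \eqref{TheEq} is equivalent to $Q(D)[y]=A\,e^{\alpha t}t^{n}$. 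Equivalently, a single direct differentiation produces the action formula
\begin{equation*}
L\bigl[x^{\alpha}(\ln x)^{k}\bigr]=x^{\alpha}\Bigl[Q(\alpha)(\ln x)^{k}+k\,Q'(\alpha)(\ln x)^{k-1}+k(k-1)(\ln x)^{k-2}\Bigr],
\end{equation*}
which is the identity driving the whole argument.

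Next I would apply the exponential shift $Q(D)[e^{\alpha t}w]=e^{\alpha t}Q(D+\alpha)[w]$, reducing the problem to finding a polynomial $w(t)$ with $Q(D+\alpha)[w]=A\,t^{n}$; undoing the substitution via $w(\ln x)=x^{-\alpha}y_p(x)$ then recovers $y_p$. Because $Q$ is a monic quadratic, its Taylor expansion around $\alpha$ terminates exactly,
\begin{equation*}
Q(D+\alpha)=Q(\alpha)+Q'(\alpha)\,D+D^{2},
\end{equation*}
using $Q''\equiv 2$. It is now precisely the vanishing or not of $Q(\alpha)$ and $Q'(\alpha)$ that produces the three cases of the theorem, since these control the lowest order of differentiation actually present in $Q(D+\alpha)$.

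In the generic case $Q(\alpha)\neq0$ I would try $w(t)=\sum_{i=0}^{n}c_{i}t^{i}$; collecting the coefficient of $t^{j}$ gives $Q(\alpha)c_{j}+(j+1)Q'(\alpha)c_{j+1}+(j+2)(j+1)c_{j+2}$, and matching against $A\,t^{n}$ yields $c_{n}=A/Q(\alpha)$, then $c_{n-1}$, and finally the stated downward recursion for $i\le n-2$. When $\alpha$ is a simple root, $Q(\alpha)=0$ while $Q'(\alpha)\neq0$, so $Q(D+\alpha)=D\bigl(Q'(\alpha)+D\bigr)$ annihilates constants; a degree--$n$ ansatz can no longer reproduce $t^{n}$, which forces the degree bump and the ansatz $w(t)=\sum_{i=0}^{n}c_{i}t^{i+1}$, giving $c_{n}=A/\bigl((n+1)Q'(\alpha)\bigr)$ and $c_{i}=-\frac{i+2}{Q'(\alpha)}c_{i+1}$. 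When $\alpha$ is a double root, $Q(\alpha)=Q'(\alpha)=0$ and $Q(D+\alpha)=D^{2}$, so the equation is simply $w''=A\,t^{n}$ and two integrations give $w(t)=\frac{A}{(n+1)(n+2)}t^{n+2}$. Replacing $t$ by $\ln x$ in each case reproduces the three stated forms of $y_p$ together with their recursions.

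The argument is essentially mechanical once the action formula is available; I expect the only real care to lie in the bookkeeping of the index shifts when equating coefficients, so that the three pieces of $Q(D+\alpha)$ land on the correct powers of $t$. The one conceptual point worth stating explicitly is that the constants of integration left free in the simple-- and double--root cases (a constant for a simple root, a constant and a linear term for a double root) are exactly the homogeneous Euler--Cauchy solutions $x^{\alpha}$, respectively $x^{\alpha}$ and $x^{\alpha}\ln x$, and hence may be set to zero since they contribute only to $V_h$ and not to a particular solution.
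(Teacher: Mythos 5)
Your proof is correct and follows essentially the same route as the paper: the paper substitutes $y_p(x)=x^\alpha P(x)$ directly and arrives at the reduced equation $x^2P''(x)+\bigl(Q'(\alpha)+1\bigr)xP'(x)+Q(\alpha)P(x)=A(\ln x)^n$, which under $t=\ln x$ is exactly your $Q(D+\alpha)[w]=A\,t^{n}$ with $Q(D+\alpha)=D^{2}+Q'(\alpha)D+Q(\alpha)$. The subsequent polynomial ans\"atze and coefficient matching in the three cases are identical, so the operator-calculus packaging changes nothing of substance.
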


\begin{proof}
We start by proposing that the particular solution of Equation \eqref{TheEq} is of the form $y_p(x)=x^\alpha P(x)$. Calculating first and second derivatives, we obtain 
\begin{eqnarray*}
y_p^\prime(x) &=& x^{\alpha-1} \prect{ x P^\prime(x)+\alpha P(x)} \\
y_p^{\prime \prime}(x) &=& x^{\alpha-2}\prect{ x^2 P^{\prime \prime}(x) + 2 \alpha x P^\prime(x) + \alpha (\alpha-1) P(x)},
\end{eqnarray*}
from where
\begin{eqnarray*}
x^2 y_p^{\prime \prime}(x) + a x y_p^{\prime}(x) + b y_p(x) &=& x^\alpha  \prect{ x^2 P^{\prime \prime}(x) + \paren{Q^\prime(\alpha)+1} x P^\prime(x) + Q(\alpha) P(x)}.
\end{eqnarray*}
Thus $P(x)$ is such that 
\begin{equation}\label{cond1}
x^2 P^{\prime \prime}(x) + \paren{Q^\prime(\alpha)+1} x P^\prime(x) + Q(\alpha) P(x) = A (\ln x)^n.
\end{equation}
Taking into account whether $Q(\alpha)$ is null or not, we end up with different cases, described hereafter.
\begin{enumerate}
\item If $\alpha$ is not a root of $Q$, then $P(x)=\sum_{i=0}^{n} c_i \paren{\ln x}^i$, as we prove next. For that, we compute the first and second derivatives:
\begin{eqnarray*}
P^\prime(x) &=& \frac{1}{x} \sum_{i=1}^{n} i c_i \paren{\ln x}^{i-1} \\
P^{\prime \prime}(x) &=& \frac{1}{x^2}\prect{\sum_{i=2}^{n}{i(i-1)c_i (\ln x)^{i-2}} - \sum_{i=1}^{n}{i c_i (\ln x)^{i-1}}}.
\end{eqnarray*}
Thus, $x^2 P^{\prime \prime}(x) + \paren{Q^\prime(\alpha)+1} x P^\prime(x) + Q(\alpha) P(x)$ is given by
\begin{eqnarray*}
&&\sum_{i=0}^{n-2}{\prect{(i+2)(i+1)c_{i+2}+Q^\prime(\alpha)(i+1)c_{i+1}+Q(\alpha)c_i}(\ln x)^i} \\
&&+ \prect{Q^\prime(\alpha) n c_n + Q(\alpha) c_{n-1}} (\ln x)^{n-1} + Q(\alpha) c_n (\ln x)^n.
\end{eqnarray*}
Thus  \eqref{cond1} holds if $ Q(\alpha) c_n = A$, $Q^\prime(\alpha) n c_n + Q(\alpha) c_{n-1} = 0$ and $(i+2)(i+1)c_{i+2}+Q^\prime(\alpha)(i+1)c_{i+1}+Q(\alpha)c_i = 0 , \; \text{for} \; i=0, 1,..., n-2$, which leads to the result.
%
%
\item If $\alpha$ is a root of $Q$ with multiplicity one, then $P(x)=\sum_{i=0}^{n} c_i \paren{\ln x}^{i+1}$. In fact, calculating first and second derivatives, we obtain 
\begin{eqnarray*}
P^\prime(x) &=& \frac{1}{x} \sum_{i=0}^{n} (i+1) c_i \paren{\ln x}^{i} \\
P^{\prime \prime}(x) &=& \frac{1}{x^2}\prect{\sum_{i=1}^{n}{(i+1)i c_i (\ln x)^{i-1}} - \sum_{i=0}^{n}{(i+1) c_i (\ln x)^{i}}}.
\end{eqnarray*}
Given that $Q(\alpha)=0$, then $x^2 P^{\prime \prime}(x) + \paren{Q^\prime(\alpha)+1} t P^\prime(x) + Q(\alpha) P(x)$ is given by $\sum_{i=0}^{n-1} \prect{(i+2)c_{i+1}+Q^\prime(\alpha)c_i} (\ln x)^i + Q^\prime(\alpha) (n+1) c_n (\ln x)^n$.

Assuming that $\alpha$ has multiplicity one we have $Q'(\alpha) \neq 0$. Thus, in order to have \eqref{cond1}, we need to set that $Q^\prime(\alpha) (n+1) c_n = A$ and $(i+2)c_{i+1}+Q'(\alpha)c_i = 0 , \; \text{for} \; i=0, 1,..., n-1$, and the result follows. 

%
%
\item If $\alpha$ is a root of $Q$ with multiplicity two, then $P(x)= c_n \paren{\ln x}^{n+2}$ as
\begin{eqnarray*}
P^\prime(x) &=& \frac{1}{x} c_n (n+2) \paren{\ln x}^{n+1} \\
P^{\prime \prime}(x) &=& \frac{1}{x^2} c_n (n+2)\prect{(n+1) (\ln x)^n - (\ln x)^{n+1}}.
\end{eqnarray*}
Since $Q(\alpha)=0$ and $Q^\prime(\alpha)=0$, then $x^2 P^{\prime \prime}(x) + \paren{Q^\prime(\alpha)+1} t P^\prime(x) + Q(\alpha) P(x)$ is given by $c_n (n+2) (n+1) (\ln x)^n$.
Finally, in order to have \eqref{cond1} we conclude that $c_n=\frac{A}{(n+1) (n+2)}$.
\end{enumerate}
\end{proof}

This theorem is useful in two ways: first it provides a  way to compute (recursively) the particular solution of the differential equation \eqref{TheEq}. Second, it provides the tool to derive explicit expressions for the involved coefficients. In the following theorem we present such result. 

\begin{theorem}[non-recursive]\label{Teo2}
Consider the second order ODE presented in \eqref{TheEq}, with the corresponding characteristic polynomial $Q$ given by (\ref{discPoly}).
%
\begin{itemize}
\item If $\alpha$ is not a root of $Q$, the particular solution of \eqref{TheEq} is given by $y_p(x) = x^\alpha \sum_{i=0}^{n} c_i \paren{\ln x}^i$, with
\begin{eqnarray}
c_i &=&  (-1)^{n-i} \; \frac{n!}{i!} \; \frac{A}{Q(\alpha)^{n-i+1}} \; \times \label{ciNoRoot} \\
&& \sum\limits_{\substack{j=0 \\ j \in \N_0}}^{\frac{n-i}{2}}{(-1)^j \; \binom{n-i-j}{j} \; Q'(\alpha)^{n-i-2j} \; Q(\alpha)^j}, \notag
\end{eqnarray}
for $i=0, 1, 2,.., n$, where $\binom{k}{r} = \frac{k!}{r! (k-r)!}$, with $k \geq r \geq 0$.

\item If $\alpha$ is a root of $Q$ with multiplicity one, the particular solution of \eqref{TheEq} is $y_p(x) = x^\alpha \sum_{i=0}^{n} c_i \paren{\ln x}^{i+1}$, with
\begin{equation}\label{ci1Root}
c_i = (-1)^{n-i} \; \frac{n!}{(i+1)!} \; \frac{A}{Q'(\alpha)^{n-i+1}}, \; \text{for} \; i=0, 1, 2,.., n.
\end{equation}

\item If $\alpha$ is a root of $Q$ with multiplicity two, the particular solution of \eqref{TheEq} is $y_p(x) = x^\alpha c_n \paren{\ln x}^{n+2}$, with $c_n=\frac{A}{(n+1)(n+2)}$.
\end{itemize}
\end{theorem}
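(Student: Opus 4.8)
The plan is to verify that the closed-form coefficients asserted here satisfy the recursions established in Theorem~\ref{Teo1}; since those recursions together with their initial data determine the $c_i$ uniquely, this suffices. I would treat the three cases separately, in increasing order of difficulty, each by downward induction on $i$ from $n$ to $0$.

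The multiplicity-two case needs no argument: Theorem~\ref{Teo1} already gives $c_n=\frac{A}{(n+1)(n+2)}$ and there is no recursion to unwind. For the multiplicity-one case, the recursion is the one-step rule $c_i=-\frac{i+2}{Q'(\alpha)}c_{i+1}$ with $c_n=\frac{A}{(n+1)Q'(\alpha)}$. The base case $i=n$ matches the claimed formula $c_i=(-1)^{n-i}\frac{n!}{(i+1)!}\frac{A}{Q'(\alpha)^{n-i+1}}$ directly, and for the inductive step one substitutes the formula for $c_{i+1}$ and checks that the factor $-\frac{i+2}{Q'(\alpha)}$ reproduces exactly the sign flip, the shift $n-i-1\mapsto n-i$ in the exponent of $Q'(\alpha)$, and the factorial change $\frac{1}{(i+2)!}\mapsto\frac{1}{(i+1)!}$. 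This is a pure telescoping product and is routine.

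The no-root case is the substantive one. Here it is convenient to isolate the inner sum by setting
\[
U_m \;=\; \sum_{j=0}^{\lfloor m/2\rfloor}(-1)^j\binom{m-j}{j}\,Q'(\alpha)^{m-2j}\,Q(\alpha)^{j},
\]
so that the claim reads $c_i=(-1)^{n-i}\frac{n!}{i!}\frac{A}{Q(\alpha)^{n-i+1}}U_{n-i}$. The heart of the proof is the two-term recurrence $U_m=Q'(\alpha)U_{m-1}-Q(\alpha)U_{m-2}$, with $U_0=1$ and $U_1=Q'(\alpha)$, which follows from Pascal's rule $\binom{m-j}{j}=\binom{m-1-j}{j}+\binom{m-1-j}{j-1}$ after reindexing the contribution of $Q(\alpha)U_{m-2}$ by $j\mapsto j-1$. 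Granting this, the verification of Theorem~\ref{Teo1}'s three-term recursion $c_i=-\frac{i+1}{Q(\alpha)}\prect{Q'(\alpha)c_{i+1}+(i+2)c_{i+2}}$ is mechanical: substituting the closed forms for $c_{i+1}$ and $c_{i+2}$, the factor $(i+2)$ cancels the denominator $(i+2)!$ down to $(i+1)!$, the powers of $Q(\alpha)$ align so that the bracket collapses to $(-1)^{n-i-1}\frac{n!}{(i+1)!}\frac{A}{Q(\alpha)^{n-i}}\paren{Q'(\alpha)U_{n-i-1}-Q(\alpha)U_{n-i-2}}$, and then the recurrence for $U_m$ replaces the parenthesis by $U_{n-i}$; finally multiplying by $-\frac{i+1}{Q(\alpha)}$ reconstitutes exactly $c_i$. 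The two base cases $c_n=\frac{A}{Q(\alpha)}$ (i.e.\ $U_0=1$) and $c_{n-1}=-nA\frac{Q'(\alpha)}{Q(\alpha)^2}$ (i.e.\ $U_1=Q'(\alpha)$) are read off directly.

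The main obstacle I anticipate is establishing and correctly bookkeeping the recurrence for $U_m$: the upper summation limit $\lfloor m/2\rfloor$ shifts with the parity of $m$, so the Pascal-rule reindexing must be checked to not produce spurious boundary terms (in particular that $\binom{m-1-j}{j-1}$ vanishes, under the convention $\binom{k}{-1}=0$, at the omitted endpoints). Once this identity is pinned down, the remaining sign and factorial manipulations are routine, and the whole statement follows by the downward induction described above.
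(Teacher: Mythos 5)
Your proposal is correct and follows essentially the same route as the paper: a downward induction on $i$ that verifies the closed forms against the recursions of Theorem~\ref{Teo1}, with the no-root case hinging on Pascal's rule after reindexing the second sum (what the paper calls ``joining the two sums and taking into account some permutation's properties''). Isolating the inner sum as $U_m$ and stating its two-term recurrence explicitly is a cleaner packaging of the same computation, including the parity/boundary check the paper leaves implicit.
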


\begin{proof}
The last case coincides with the one presented in Theorem \ref{Teo1}. For the other two cases, we use backwards mathematical induction to prove it, taking advantage of the recursive solutions presented in Theorem \ref{Teo1}.

\begin{enumerate}
\item If $\alpha$ is not a root of $Q$, we already know that, the particular solution is of the form $y_p(x) = x^\alpha \sum_{i=0}^{n} c_i \paren{\ln x}^i$, where $c_n=\frac{A}{Q(\alpha)}$, $c_{n-1}= - n A \frac{Q'(\alpha)}{Q(\alpha)^2}$ and $c_i=-\frac{i+1}{Q(\alpha)} \prect{Q'(\alpha)  c_{i+1} + (i+2) c_{i+2}}$ for $i=0, 1, 2, ..., n-2$. We want to prove that, for $i=0, 1, 2,.., n$, the coefficients  $c_i$ can be written in the general form presented in \eqref{ciNoRoot}.

Using backwards mathematical induction we have two base cases to be verified, $c_n$ and $c_{n-1}$, which we know from Theorem \ref{Teo1} that are $\frac{A}{Q(\alpha)}$ and $- n A \frac{Q'(\alpha)}{Q(\alpha)^2}$, respectively. Taking into account \eqref{ciNoRoot}, we have
\begin{eqnarray*}
c_n &=& (-1)^{0} \; \frac{n!}{n!} \; \frac{A}{Q(\alpha)} \;{(-1)^0 \; \binom{0}{0} \; Q'(\alpha)^{0} \; Q(\alpha)^0}=\frac{A}{Q(\alpha)},\\
c_{n-1} &=&  (-1) \; \frac{n!}{(n-1)!} \; \frac{A}{Q(\alpha)^{2}} \;{(-1)^0 \; \binom{1}{0} \; Q'(\alpha)^{1} \; Q(\alpha)^0} =  - n A \frac{Q'(\alpha)}{Q(\alpha)^2},
\end{eqnarray*}
which means that the base cases are verified.
For the inductive step, we assume that, for $i=0, 1, 2, ..., n-2$, $c_{i+1}$ and $c_{i+2}$ are given by \eqref{ciNoRoot}, and we want to prove that $c_i$ is also given by \eqref{ciNoRoot}.

From Theorem \ref{Teo1}, we know that $c_i=-\frac{i+1}{Q(\alpha)} \prect{Q'(\alpha)  c_{i+1} + (i+2) c_{i+2}}$ for $i=0, 1, 2, ..., n-2$. Plugging the expressions of $c_{i+1}$ and $c_{i+2}$, which are defined by \eqref{ciNoRoot},  in the expression of $c_i$ we obtain
\begin{eqnarray*}
-\frac{i+1}{Q(\alpha)} && \prectI{Q'(\alpha)(-1)^{n-i-1} \frac{n!}{(i+1)!}  \frac{A}{Q(\alpha)^{n-i}} \times}  \\
 && \sum\limits_{\substack{j=0 \\ j \in \N_0}}^{\frac{n-i}{2}-\frac{1}{2}}{(-1)^j  \binom{n-i-j-1}{j}  Q'(\alpha)^{n-i-2j-1}  Q(\alpha)^j} \\
&& +(i+2)(-1)^{n-i-2}  \frac{n!}{(i+2)!} \frac{A}{Q(\alpha)^{n-i-1}} \times \\
&& \prectF{ \sum\limits_{\substack{j=0 \\ j \in \N_0}}^{\frac{n-i}{2}-1}{(-1)^j  \binom{n-i-j-2}{j}  Q'(\alpha)^{n-i-2j-2} Q(\alpha)^j}}. 
\end{eqnarray*}
Rearranging the terms and changing the variable in the second sum, we get
\begin{eqnarray*}
 (-1)^{n-i} \frac{n!}{i!} \frac{A}{Q(\alpha)^{n-i+1}} && \prectI{\; \sum\limits_{\substack{j=0 \\ j \in \N_0}}^{\frac{n-i}{2}-\frac{1}{2}}{(-1)^j \; \binom{n-i-j-1}{j} \; Q'(\alpha)^{n-i-2j} \; Q(\alpha)^j}}\\
&& + \prectF{\sum\limits_{\substack{j=1 \\ j \in \N_0}}^{\frac{n-i}{2}}{(-1)^{j} \; \binom{n-i-j-1}{j-1} \; Q'(\alpha)^{n-i-2j} \; Q(\alpha)^{j}}}.
\end{eqnarray*}
Joining the two sums and taking into account some permutation's properties, we end up with the following expression
\begin{eqnarray*}
(-1)^{n-i} \frac{n!}{i!} \frac{A}{Q(\alpha)^{n-i+1}} && \prectI{\; \sum\limits_{\substack{j=1 \\ j \in \N_0}}^{\frac{n-i}{2}-\frac{1}{2}}{(-1)^j \binom{n-i-j}{j} \; Q'(\alpha)^{n-i-2j} \; Q(\alpha)^j}}\\
&& + \prectF{Q'(\alpha)^{n-i} + (-1)^{\frac{n-i}{2}} \; Q(\alpha)^{\frac{n-i}{2}} \chi_{\chav{n-i \; \text{is even} }}}.
\end{eqnarray*}
Finally, we conclude that
\begin{eqnarray*}
c_i &=& (-1)^{n-i} \frac{n!}{i!} \frac{A}{Q(\alpha)^{n-i+1}} \sum\limits_{\substack{j=0 \\ j \in \N_0}}^{\frac{n-i}{2}}{(-1)^j \binom{n-i-j}{j} \; Q'(\alpha)^{n-i-2j} \; Q(\alpha)^j},
\end{eqnarray*}
which coincides with the expression given by \eqref{ciNoRoot}. Thus the proof for the first case is finished.

\item If $\alpha$ is a root of $Q$ with multiplicity one, as we proved before, the particular solution is of the form $y_p(x) = x^\alpha \sum_{i=0}^{n} c_i \paren{\ln x}^{i+1}$, where
$c_n=\frac{A}{(n+1) Q'(\alpha)}$ and $c_i = - \frac{i+2}{Q'(\alpha)} c_{i+1}$, for $i=0,1,2, ..., n-1$.
We want to prove that we can write the coefficients  $c_i$ in the general way presented  in \eqref{ci1Root}.

As before, we use backwards mathematical induction. Starting with the base case and
taking into account \eqref{ci1Root}, we have
$$c_n = (-1)^{0} \; \frac{n!}{(n+1)!} \; \frac{A}{Q'(\alpha)}=\frac{A}{(n+1) Q'(\alpha)},$$
which coincides with the expression given by Theorem \ref{Teo1}. Thus, the base case is verified.
To prove the induction step, for $i=0,1,2,..,n-1$, we assume that $c_{i+1}$ is given by \eqref{ci1Root} and we want to prove that $c_i$ is also given by \eqref{ci1Root}.

From Theorem \ref{Teo1}, we know that  $c_i =  - \frac{i+2}{Q'(\alpha)} c_{i+1}$, for $i=0,1,2, ..., n-1$. Plugging in $c_i$ the expression of $c_{i+1}$, which is given by \eqref{ci1Root}, we obtain
\begin{equation*}
c_i = - \frac{i+2}{Q'(\alpha)} \; (-1)^{n-i-1} \; \frac{n!}{(i+2)!} \; \frac{A}{Q'(\alpha)^{n-i}} = (-1)^{n-i} \; \frac{n!}{(i+1)!} \; \frac{A}{Q'(\alpha)^{n-i+1}},
\end{equation*}
and therefore the induction step is proved. With this we conclude the proof.
\end{enumerate}
\end{proof}

A special case of the previous theorem is when $n=0$. In this case the differential equation is
\begin{equation*}\label{eq}
x^2 y^{\prime \prime}(x) + a x y^{\prime}(x) + b y(x) = A x^\alpha.
\end{equation*}
Using the results proved before, the corresponding particular solution is given by
$$y_p(x) = \varphi \; x^\alpha \paren{\ln x}^r,$$ 
where $\varphi=\frac{A}{Q^{(r)} (\alpha)}$\footnote{$Q^{(r)}(\alpha)$ is the derivative of order $r$ of $Q$ w.r.t. $\alpha$. In particular, if $r=0$ we consider that $Q^{(r)}(\alpha)$ is exactly $Q(\alpha)$.}, with  $r$\footnote{$r$ can take the values $0,1$ or $2$. We consider $r=0$ when $\alpha$ is not a root of $Q$.} being the multiplicity of $\alpha$ as a root of $Q$.

Note that in Example \ref{Example1} we could use this result many times. Namely, to determine $V^1_p$ we could use it twice:
\begin{itemize}
\item  with $\alpha=\theta$ and $A = c \rho (1+ \kappa)^\theta$ obtaining the solution $\frac{c \rho (1+ \kappa)^\theta}{Q(\theta)} x^\theta$, once we assumed that $\theta$ is not a root of $Q$;
\item with $\alpha=0$ and $A = - c I$ obtaining the solution $- \frac{c I}{b}$.
\end{itemize}
%
%
%
%
%
%
Clearly, for the other branches, $V_2$ and $V_3$,  we could also use this special case in some situations. For others, we would need to use the general expression presented in the theorem. However, given the type of expressions of $V_p^i$ (see \eqref{eq:Vpi}), in general, we need to find a solution for a sum of functions.
Taking this into account, we generalize the  Theorem \ref{Teo2} in the following corollary.

\begin{corollary}
Consider the following second order ODE:
\begin{equation}\label{generalCase}
x^2 y^{\prime \prime}(x) + a x y^{\prime}(x) + b y(x) = \sum_{k=1}^{m} A_k x^{\alpha_k} \paren{\ln x}^{n_k},
\end{equation}
with $x>0$, $a, b \in \R$, $\alpha_k, A_k \in \R\setminus\chav{0}$ and $n_k \in \N_0$, for $k=1,2,...,m$, with $m \in \N$.
Then the particular solution of \eqref{generalCase} is of the form $y_p(x) = \sum_{k=1}^m y_{p_k}(x)$\footnote{Note that $y_p$ has at least $m$ parcels and at most $m+\sum_{k=1}^m n_k$ parcels. 
When $\alpha_1=\alpha_2=...=\alpha_m$ are roots of $Q$ all with multiplicity two, $y_p$ has $m$ parcels. 
Oppositely, when none of the $\alpha_k$ (with $k=1,2,...,m$) has multiplicity two, $y_p$ has $m+\sum_{k=1}^m n_k$ parcels.}, where $y_{p_k}(x)$ is the solution of the equation
\begin{equation*}
x^2 y_k^{\prime \prime}(x) + a x y_k^{\prime}(x) + b y_k(x) =  A_k x^{\alpha_k} \paren{\ln x}^{n_k},
\end{equation*}
which is presented in Theorem \ref{Teo2}.
\end{corollary}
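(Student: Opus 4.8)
The plan is to exploit the linearity of the differential operator appearing on the left-hand side of \eqref{generalCase}. Writing $\mathcal{D}[y](x) = x^2 y''(x) + a\, x\, y'(x) + b\, y(x)$, this operator is linear, since differentiation is linear and multiplication by the fixed functions $x^2$, $a x$ and $b$ preserves linear combinations. This linearity is the entire engine of the argument, and the substantive analytic content has already been supplied by Theorem \ref{Teo2}.

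First I would invoke Theorem \ref{Teo2} to obtain, for each $k \in \chav{1,2,\dots,m}$, a particular solution $y_{p_k}$ of the single-term equation $\mathcal{D}[y_k](x) = A_k x^{\alpha_k}(\ln x)^{n_k}$; the theorem guarantees its existence and gives its explicit form according to whether $\alpha_k$ is a non-root, a simple root, or a double root of $Q$. Setting $y_p := \sum_{k=1}^m y_{p_k}$, I would then compute $\mathcal{D}[y_p] = \sum_{k=1}^m \mathcal{D}[y_{p_k}] = \sum_{k=1}^m A_k x^{\alpha_k}(\ln x)^{n_k}$, where the first equality is exactly linearity and the second uses that each $y_{p_k}$ solves its own equation. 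Since this last sum is precisely the right-hand side of \eqref{generalCase}, the function $y_p$ is a particular solution, which is the assertion of the corollary.

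There is no genuine analytic obstacle here: the statement is the superposition principle specialized to the Euler--Cauchy operator, so the main work was already carried out in Theorem \ref{Teo2}. The only care needed is the combinatorial bookkeeping recorded in the footnote, namely counting the number of summands (\emph{parcels}) of $y_p$. I would verify this directly from Theorem \ref{Teo2}: a term whose exponent $\alpha_k$ is a double root of $Q$ contributes the single monomial $x^{\alpha_k}(\ln x)^{n_k+2}$, whereas a term whose $\alpha_k$ is a non-root or a simple root contributes $n_k+1$ monomials. Summing these counts over $k$ and examining the two extreme configurations then yields the stated lower bound $m$ (all $\alpha_k$ equal and double roots) and upper bound $m+\sum_{k=1}^m n_k$ (no $\alpha_k$ a double root), completing the remark.
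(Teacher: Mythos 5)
Your argument is correct and is exactly the (unstated) reasoning behind the corollary: the paper offers no proof, treating the result as an immediate consequence of the linearity of the Euler--Cauchy operator together with Theorem \ref{Teo2}, which is precisely the superposition argument you give. Your verification of the parcel count, including the observation that a double-root term contributes one monomial while the other cases contribute $n_k+1$, also matches the footnote.
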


\section{CONCLUSIONS}\label{C}
In this paper we present a contribution to optimal stopping problems when the decision to stop may be taken due to a jump that leads directly to the stopping region, across the boundary state.

The equation that holds in the continuation region is a differential--difference equation, whose solution we are able to provide, using a backward argument.
Firstly, we get a recursive relation of the involved coefficients, which is then used to derive a closed form expression for the particular solution.

Using these results, we may, in specific situations, derive the value function, which will be a piecewise function, with an infinite number of subdomains, as we present in \eqref{Vdefinition}.

\begin{acknowledgments}
Most of Rita Pimentel's research was supported by her Ph.D. research grant from FCT with reference number SFRH/BD/97259/2013. The final part of the work was carried out during the tenure of an ERCIM `Alain Bensoussan' Fellowship Programme.\\
The authors also thank to Carlos Oliveira for the discussions and all valuable suggestions.
\end{acknowledgments}



\end{document}